\def\fakeend{\end{document}}
\newcommand{\capwidth}{.7\textwidth}
\newcommand{\ignore}[1]{}
\newtheorem{theorem}{Theorem}[section]
\newtheorem{corollary}[theorem]
{Corollary}
\newtheorem{lemma}[theorem]{Lemma}
\newtheorem{definition}[theorem]{Definition}
\newtheorem{observation}[theorem]{Observation}
\date{}
\newenvironment{proof}%
{\noindent{\bf Proof.}\ }%
{\hfill\eopf\par\bigskip}%
\def\i4c{{internally--4--connected}}
\def\2cc{{2--crossing--critical}}
\def\m2{{{\cal M}_2^3}}
\newcommand{\eopf}{\raisebox{0.8ex}{\framebox{}}}
\title{Intersections of Cycling 2-factors}
\author{Drew J. Lipman \\ Clemson University}
\begin{document}

\maketitle
\thispagestyle{empty}
\pagestyle{empty}

\abstract Define an embedding of graph $G=(V,E)$ with $V$ a finite set of distinct points on the unit circle and $E$ the set of line segments connecting the points.
Let $V_1,\ldots,V_k$ be a labeled partition of $V$ into equal parts.
A 2-factor is said to be {\em cycling} if for each $u\in V$, $u\in V_i$ implies $u$ is adjacent to a vertex in $V_{i+1\: (mod \: k)}$ and a vertex in $V_{i-1\: (mod\: k)}$.
In this paper, we will present some new results about cycling 2-factors including a tight upper bound on the minimum number of intersections of a cycling 2-factor for $k=3$.

\section{Introduction}
Plane embeddings of graphs have a long history in graph theory.
In this paper, we study geometric graphs with vertices in convex position.
%Definition of Geometric graph
\begin{definition}
A graph $G=(V,E)$ is a {\em geometric graph} if the vertices are a set of distinct points in the plane and the edges are a set of straight-line segments.
We say two edges {\em intersect} if the straight-line segments intersect. 
\end{definition}

In the language of \cite[p.~88-90]{Diestel}, a geometric graph can be thought of as a {\em plane graph} where each {\em arc} is a trivial polygonal curve.

%History with 2 color points in the plane
Problems with geometric graphs have been very well studied for graphs with a bipartition of the vertices $V=V_1\cup V_2$ with $|V_1|=|V_2|$, see \cite{RBSurvey}.
One of these problems is the {\em Alternating Hamiltonian Cycle} problem, that is to find a Hamiltonian cycle that alternates between $X$ and $Y$ and has the minimum number of intersections between edges.
In \cite{KKY2000}, Kaneko, Kano, and Yoshimoto showed that the minimum number of intersections of an alternating Hamiltonian cycle is bounded above by $|V_1|-1$, which is tight in some instances.

We consider the case when $V$ is partitioned into at least three equal sets and the points are in {\em convex position}.
%Definition of convex position
\begin{definition}
Given a set of distinct points $\{x_1,\ldots,x_n\}\subseteq \mathbb{R}^2$ we say the points are in {\em convex position} if none of the points can be represented as a convex combination of the others.
That is,

$x_i\notin conv(\{x_1,\ldots,x_n\}\setminus\{x_i\})$ for all $i=1,2,\ldots,n$.
\end{definition}

Notice that, given a set of points in convex position, any subset of these points will also be in convex position.

Geometric graphs with points in convex position can be thought of as points equally spaced around a circle with chords connecting the points in different sets of the partition of $V$.

%Definition of cycling 2-factors
\begin{definition}
Let $G=(V,E)$ be a graph.
Let $V_1,\ldots,V_k$ be a labeled partition of $V$ into equal parts, $k\geq 3$.
A 2-factor of $G$ is said to be {\em cycling} if, for each $u\in V$, $u\in V_i$ implies $u$ is adjacent to a vertex in $V_{i+1\: (mod \: k)}$ and a vertex in $V_{i-1\: (mod\: k)}$.
\end{definition}

This is a generalization of the {\em alternating} condition.
% $V=V_1\cup V_2$ implies $V_{i+1}=V_{i-1}$, so we would need $v\in V_i$ implies $v$ is adjacent to two distinct elements of $V_{i+1\:mod\: 2}$.
In this particular case where $k=2$, for all $i$, $i+1\equiv i-1\: (mod\: 2)$ so each $v\in V_i$ is adjacent to two distinct vertices in $V_{i+1\: (mod\: 2)}$.
Observe that any cycling 2-factor will be a set of cycles with length a multiple of $k$.

%Introduction of the problem of interest
Let $G$ be a geometric graph $G\cong K_{kn}$, with vertices in convex position and $V=V_1\cup\ldots\cup V_k$ a labeled partition with $|V_i|=n$.
We are interested in studying the cycling 2-factors of $G$ that have the smallest number of intersections.
In addition, we are interested in finding the number of intersections of this cycling 2-factor.
In Figure \ref{figExamples}, we have a geometric graph $G\cong K_{6}$ with $k=3$, and two examples of cycling 2-factors of $G$.
Each element of the partition has two vertices indicated by different shades.
The first example, $(a)$, has two cycles with four intersections and the second, $(b)$, has one cycle with three intersections.
Later, we will show that the second example is in fact the unique cycling 2-factor with the minimum number of intersections for $G$.

%Example figure 1.
\begin{figure}[h]\label{figExamples}
	\centering
\begin{tikzpicture}

\definecolor{white}{rgb}{1.0,1.0,1.0}
\definecolor{black}{rgb}{0.0,0.0,0.0}
\definecolor{red}{rgb}{1.0,0.0,0.0}
\definecolor{blue}{rgb}{0.0,0.0,1.0}
\definecolor{green}{rgb}{0.0,1.0,0.0}
\definecolor{gray}{rgb}{0.8,0.8,0.8}
\tikzset{VertexStyle/.style = { shape=circle,
                                   fill         = none,
                                   minimum size = 2pt,
                                   text         = black,
                                   draw=none}}
\Vertex[LabelOut=false,L=\hbox{$\texttt{(a)}$},x=1.5cm,y=-.5cm]{u}
\tikzset{VertexStyle/.style = {shape        = circle,
                                   fill         = black,
                                   inner sep = 0pt,
                                   minimum size = 5pt,
                                   text         = black,
                                   draw}}
\Vertex[LabelOut=false,L=\hbox{$\texttt{}$},x=0.0cm,y=1.0cm]{v0}
\Vertex[LabelOut=false,L=\hbox{$\texttt{}$},x=.7cm,y=2.0cm]{v1}
\tikzset{VertexStyle/.style = {shape        = circle,
                                   fill         = gray,
                                   inner sep = 0pt,
                                   minimum size = 5pt,
                                   text         = black,
                                   draw}}
\Vertex[LabelOut=false,L=\hbox{$\texttt{}$},x=2.3cm,y=2.0cm]{v2}
\Vertex[LabelOut=false,L=\hbox{$\texttt{}$},x=3.0cm,y=1.0cm]{v3}
\tikzset{VertexStyle/.style = {shape        = circle,
                                   fill         = white,
                                   inner sep = 0pt,
                                   minimum size = 5pt,
                                   text         = black,
                                   draw}}
\Vertex[LabelOut=false,L=\hbox{$\texttt{}$},x=2.0cm,y=0.0cm]{v4}
\Vertex[LabelOut=false,L=\hbox{$\texttt{}$},x=1.0cm,y=0.0cm]{v5}
\Edge[lw=0.025cm,style={color=black,},](v1)(v3)
\Edge[lw=0.025cm,style={color=black,},](v0)(v2)
\Edge[lw=0.025cm,style={color=black,},](v3)(v4)
\Edge[lw=0.025cm,style={color=black,},](v2)(v5)
\Edge[lw=0.025cm,style={color=black,},](v5)(v0)
\Edge[lw=0.025cm,style={color=black,},](v4)(v1)

\end{tikzpicture}
\hspace{5mm}
\begin{tikzpicture}
\definecolor{white}{rgb}{1.0,1.0,1.0}
\definecolor{black}{rgb}{0.0,0.0,0.0}
\definecolor{red}{rgb}{1.0,0.0,0.0}
\definecolor{blue}{rgb}{0.0,0.0,1.0}
\definecolor{green}{rgb}{0.0,1.0,0.0}
\definecolor{gray}{rgb}{0.8,0.8,0.8}
\tikzset{VertexStyle/.style = { shape=circle,
                                   fill         = none,
                                   minimum size = 2pt,
                                   text         = black,
                                   draw=none}}
\Vertex[LabelOut=false,L=\hbox{$\texttt{(b)}$},x=1.5cm,y=-.5cm]{u}
\tikzset{VertexStyle/.style = {shape        = circle,
                                   fill         = black,
                                   inner sep = 0pt,
                                   minimum size = 5pt,
                                   text         = black,
                                   draw}}
\Vertex[LabelOut=false,L=\hbox{$\texttt{}$},x=0.0cm,y=1.0cm]{v0}
\Vertex[LabelOut=false,L=\hbox{$\texttt{}$},x=.7cm,y=2.0cm]{v1}
\tikzset{VertexStyle/.style = {shape        = circle,
                                   fill         = gray,
                                   inner sep = 0pt,
                                   minimum size = 5pt,
                                   text         = black,
                                   draw}}
\Vertex[LabelOut=false,L=\hbox{$\texttt{}$},x=2.3cm,y=2.0cm]{v2}
\Vertex[LabelOut=false,L=\hbox{$\texttt{}$},x=3.0cm,y=1.0cm]{v3}
\tikzset{VertexStyle/.style = {shape        = circle,
                                   fill         = white,
                                   inner sep = 0pt,
                                   minimum size = 5pt,
                                   text         = black,
                                   draw}}
\Vertex[LabelOut=false,L=\hbox{$\texttt{}$},x=2.0cm,y=0.0cm]{v4}
\Vertex[LabelOut=false,L=\hbox{$\texttt{}$},x=1.0cm,y=0.0cm]{v5}%
\Edge[lw=0.025cm,style={color=black,},](v1)(v2)
\Edge[lw=0.025cm,style={color=black,},](v0)(v3)
\Edge[lw=0.025cm,style={color=black,},](v3)(v4)
\Edge[lw=0.025cm,style={color=black,},](v2)(v5)
\Edge[lw=0.025cm,style={color=black,},](v5)(v0)
\Edge[lw=0.025cm,style={color=black,},](v4)(v1)

\end{tikzpicture}
\parbox{\capwidth}{\caption{$(a)$ has four intersections, $(b)$ has three and is the minimum.}}
\end{figure}
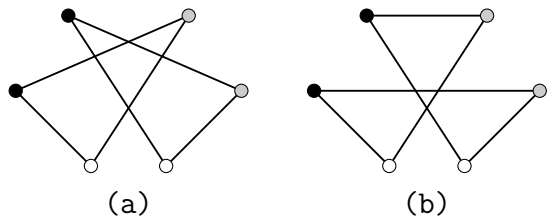

\section{Structural Results}
%Definition of the partition {u,v}+L_{uv}+R_{uv}.
\begin{definition}
Let $u$ and $v$ be distinct vertices in a geometric graph $G$.
The edge $uv\in E(G)$ induces a partition of $V(G)$ by $V(G)=\{u,v\}\cup L_{uv}\cup R_{uv}$, where $L_{uv}$ is a set of vertices in $G$ contained in an open half plane defined by the line connecting $u$ and $v$.
Similarly, $R_{uv}$ is the set of vertices in $G$ contained in the other open half plane defined by the line connecting $u$ and $v$.
\end{definition}

Observe that the half planes that produce sets $L_{uv}$ and $R_{uv}$ are distinguished only by the vertices of $G$, so the convention used for figures will be when viewing $uv$ as a vertical line with $u$ below $v$ then $L_{uv}$ will be on the {\em left} and $R_{uv}$ will be on the {\em right}. 

Given $u,v\in V(G)$, we will say {\em $u$ and $v$ are neighbors on the convex hull} if the line connecting $u$ and $v$ is a defining face of $conv(V(G))$, equivalently if $L_{uv}=\emptyset$ or $R_{uv}=\emptyset$.

\begin{definition}
(Following the notation from \cite{Diestel}.)
Let $A$ and $B$ be disjoint subsets of $V(G)$.
The set of edges in $G$ with a vertex in $A$ and a vertex in $B$ is denoted $E_G(A,B)$, and the number of such edges is denoted by $e_G(A,B)$.
\end{definition}

\begin{observation}\label{obs1}
The edges $uv$ and $st$ intersect if and only if $s\in L_{uv}$ and $t\in R_{uv}$, or the reverse.
Thus, the number of edges that intersect $uv$ in any $H\subseteq G$ will be $e_H(L_{uv},R_{uv})$.
\end{observation}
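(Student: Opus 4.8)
The plan is to reduce Observation~\ref{obs1} to the standard planar fact that two segments with no common endpoint cross in their relative interiors exactly when each of the two lines through them separates the endpoints of the other segment; the only place the convex-position hypothesis is genuinely needed is to pin down that the crossing point lies on the segment $uv$ itself rather than on one of its extensions. First I would record a preliminary consequence of convex position: no three vertices of $G$ are collinear, since otherwise the middle one would be a convex combination of the other two. Hence, writing $\ell$ for the line through $u$ and $v$, every vertex $w\notin\{u,v\}$ lies strictly in one of the two open half-planes bounded by $\ell$ --- this is exactly the content of the decomposition $V(G)=\{u,v\}\cup L_{uv}\cup R_{uv}$ --- and neither $u$ nor $v$ lies in $L_{uv}\cup R_{uv}$.

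Next I would prove the forward implication. Suppose $st\in E(G)$ with $\{s,t\}\cap\{u,v\}=\emptyset$ and that the segments $uv$ and $st$ meet at a point $q$. Since the two segments have no common endpoint and no three of $u,v,s,t$ are collinear, $q$ cannot coincide with any of $u,v,s,t$, so it lies in the relative interior of both segments; in particular $q\in\ell$ and $q$ lies strictly between $s$ and $t$. As $s,t\notin\ell$ while the open segment $st$ meets $\ell$ at $q$, the points $s$ and $t$ lie on opposite sides of $\ell$, i.e.\ one is in $L_{uv}$ and the other in $R_{uv}$.

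For the reverse implication --- the step that uses convex position in an essential way --- assume without loss of generality that $s\in L_{uv}$ and $t\in R_{uv}$. Then $s$ and $t$ lie strictly on opposite sides of $\ell$, so the segment $st$ meets $\ell$ in a unique point $p$, which lies strictly between $s$ and $t$. The claim is that $p\in uv$. If not, then along $\ell$ the point $p$ lies beyond one of $u,v$; after relabeling, $u$ lies strictly between $p$ and $v$. But $p$ is in the relative interior of a side of the triangle $\mathrm{conv}(\{s,t,v\})$ and $v$ is its opposite vertex, so the segment from $p$ to $v$ lies in $\mathrm{conv}(\{s,t,v\})$; hence $u\in\mathrm{conv}(\{s,t,v\})\subseteq\mathrm{conv}\bigl(V(G)\setminus\{u\}\bigr)$, contradicting convex position. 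Therefore $p\in uv$, and the edges $uv$ and $st$ intersect.

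Finally, I would deduce the counting statement. By the equivalence just established, an edge of $H$ crosses $uv$ at a point other than $u$ or $v$ if and only if it has one endpoint in $L_{uv}$ and the other in $R_{uv}$, i.e.\ if and only if it belongs to $E_H(L_{uv},R_{uv})$, so the number of such edges is $e_H(L_{uv},R_{uv})$. Since $u,v\in\ell$ lie in neither $L_{uv}$ nor $R_{uv}$, this count automatically omits any edge sharing an endpoint with $uv$, which in convex position meets $uv$ only at that shared vertex --- precisely the convention one wants when counting crossings. I do not anticipate a real obstacle in any of this; the single point demanding care is the verification that $p$ lies on the segment $uv$ and not its extension, which is exactly where convex position, rather than mere non-collinearity, must be invoked.
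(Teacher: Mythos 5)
Your proposal is correct. Note, though, that the paper offers no proof of Observation~\ref{obs1} at all: it is stated as an observation and treated as immediate, with the text moving straight on to the combinatorial consequence. So there is no argument of the paper's to compare against; what you have done is supply the missing verification, and you have done so soundly. The two directions are handled properly: non-collinearity (which you correctly derive from convex position) gives the forward implication, and your triangle argument --- if the point $p$ where $st$ meets the line through $u,v$ fell outside the segment $uv$, then $u$ would lie in $\mathrm{conv}(\{s,t,v\})$, violating convex position --- is exactly the step that fails for general geometric graphs and is the real content here; the paper's blanket convex-position hypothesis is what licenses the observation, and you identified that correctly. Your closing remark about edges sharing an endpoint with $uv$ is also the right reading of the counting claim, since such edges meet $uv$ only at the common vertex and are automatically excluded from $E_H(L_{uv},R_{uv})$. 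The only stylistic quibble is that you could note explicitly that $p\neq u,v$ (it follows from the no-three-collinear fact you already proved), but this does not affect correctness.
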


This tells us that the exact coordinates of the points are less important than the sets $L_{uv}$ and $R_{uv}$ that contain them.
Thus, we have a combinatorial framework to consider the intersections.

%Neccessary condition
\begin{theorem}\label{thmParallel}
Given a cycling 2-factor $H$ of $G=(V,E)$, $V=V_1\cup\ldots\cup V_k$, $k\geq 3$.
If $H$ has the minimum number of intersections over all cycling 2-factors of $G$, then any two edges $vu,rs\in E(H)$ such that $u,r\in V_i$ and $v,s\in V_{i+1\: (mod\: k)}$ will not intersect.
\end{theorem}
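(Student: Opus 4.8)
The plan is to argue by contradiction with a single local \emph{uncrossing} move. Suppose $H$ is a cycling 2-factor with the fewest intersections but that some pair $vu,rs\in E(H)$ with $u,r\in V_i$ and $v,s\in V_{i+1\:(mod\:k)}$ do intersect. Since two edges sharing a vertex never intersect (Observation~\ref{obs1}), the four endpoints $u,r,v,s$ are distinct, and they occur around the convex hull in one of the two interleaving orders $u,r,v,s$ or $u,s,v,r$. In either case the only non-crossing matching of $\{u,r,v,s\}$ whose chords respect the classes (no chord inside $V_i$ and none inside $V_{i+1}$) is $\{us,vr\}$. So the move is: let $H'$ be obtained from $H$ by deleting $uv$ and $rs$ and inserting $us$ and $vr$.

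First I would check that $H'$ is again a cycling 2-factor. The exchange leaves every vertex at degree $2$, and since $k\ge 3$ one sees that neither $us$ nor $vr$ was already an edge of $H$ (for example $s\in V_{i+1}\ne V_{i-1}$, so $s$ is not the $V_{i-1}$-neighbour of $u$, and $u$ already has its two incidences $uv$ and one edge into $V_{i-1}$); hence $H'$ is a genuine simple $2$-regular spanning subgraph, i.e.\ a $2$-factor. The cycling condition also persists: at $u$ the neighbour $v\in V_{i+1}$ is replaced by $s\in V_{i+1}$, at $v$ the neighbour $u\in V_i$ is replaced by $r\in V_i$, and symmetrically at $r$ and $s$, so each vertex of $H'$ still has one neighbour in each of its two cyclically adjacent classes.

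The heart of the matter is to show $\crn(H')<\crn(H)$. Writing $\chi(e,f)\in\{0,1\}$ for the indicator that $e$ and $f$ cross, the change in the number of intersections is
\[
\bigl[\chi(us,vr)-\chi(uv,rs)\bigr]\;+\;\sum_{e\in E(H)\setminus\{uv,rs\}}\bigl[\chi(e,us)+\chi(e,vr)-\chi(e,uv)-\chi(e,rs)\bigr],
\]
and the first bracket equals $0-1=-1$ because $us,vr$ do not cross while $uv,rs$ do. For the sum I would partition the vertices of $H$ other than $u,r,v,s$ among the four open arcs cut from the circle by $u,r,v,s$ --- say $A_1$ from $u$ to $r$, $A_2$ from $r$ to $v$, $A_3$ from $v$ to $s$, $A_4$ from $s$ to $u$ --- and observe, via Observation~\ref{obs1}, that whether a chord crosses $uv$, $rs$, $us$, or $vr$ depends only on which arcs hold its two endpoints. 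A finite case analysis over the arc-positions of the endpoints of $e$ gives $\chi(e,us)+\chi(e,vr)\le\chi(e,uv)+\chi(e,rs)$ for every $e$ that avoids $\{u,r,v,s\}$. The only remaining edges of $H$ are the four ``pendant'' edges $uu'$, $rr'$, $vv'$, $ss'$ at $u,r,v,s$ (their second incidences, into $V_{i-1}$ or $V_{i+2}$); each shares an endpoint with one deleted and one inserted edge, so, for instance, $uu'$ contributes $\chi(uu',vr)-\chi(uu',rs)$, which is $\le 0$ because $uu'$ crosses $vr$ only when $u'\in A_2$ while it avoids $rs$ only when $u'\in A_1\cup A_4$ --- incompatible conditions. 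The other three pendant edges are symmetric, so the whole sum is $\le 0$ and the total change is $\le-1$, contradicting minimality of $H$.

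The step I expect to be the main obstacle is the accounting for the pendant edges: a naive bound lets each of the four possibly add a crossing, which would swamp the one crossing removed by the move. What rescues the argument --- and must be stated carefully --- is that, for each pendant edge, ``acquiring a crossing with the new pair'' and ``dropping a crossing with the old pair'' are forced by disjoint arc-positions of its far endpoint, so no pendant edge can have a positive net contribution. A secondary point of care is that when $k=3$ some of $u',r',v',s'$ may coincide; this is harmless, since the four pendant edges are still pairwise distinct edges of $H$ (the classes $V_i$, $V_{i-1}$, $V_{i+1}$, $V_{i+2}$ are not all equal once $k\ge3$), so each is counted exactly once in the sum.
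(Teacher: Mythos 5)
Your proof is correct and takes essentially the same route as the paper: the identical uncrossing exchange $\{uv,rs\}\to\{us,vr\}$, with crossings of non-incident edges compared region-by-region (your arcs $A_1,\ldots,A_4$ are the paper's sets $S_1,\ldots,S_4$) and the edges incident to $u,v,r,s$ handled separately so the total strictly decreases. Your explicit verification that $H'$ is a simple cycling 2-factor and your cleaner accounting for the pendant edges are somewhat more careful than the paper's write-up, but the underlying argument is the same.
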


\begin{proof}
Given $G$ as above, $H$ a cycling two factor of $G$, and $u,r\in V_i$, $v,s\in V_{i+1\: (mod\: k)}$ such that $vu,rs\in E(H)$ intersect.

These two edges produce four sets of vertices $S_1=L_{uv}\cap L_{rs},S_2=L_{uv}\cap S_{rs},S_3=R_{uv}\cap L_{uv}$ and $S_4=R_{uv}\cap R_{st}$.
See Graph $(H)$ in Figure \ref{figParallel}.
Observe that the number of intersections of edges that do not include $u,v,r$ or $s$ for this pair of edges is
\[e(S_1,S_2)+e(S_1,S_3)+2e(S_1,S_4)+e(S_2,S_4)+2e(S_2,S_3)+e(S_3,S_4).\]

Let $H'$ be $H$ with edges $us$ and $rv$ and without edges $uv$ and $rs$.
See graph $H'$ in Figure \ref{figParallel}.

Observe that the two new edges do not intersect.
Now, the number of edges that do not include $u,v,r$ or $s$ which intersect the new edges is
\[e(S_1,S_2)+e(S_1,S_3)+2e(S_1,S_4)+e(S_2,S_4)+e(S_3,S_4).\]
Thus the number of intersections of edges that do not include $u,v,r$ or $s$ for this pair of edges does not increase.

Suppose an edge intersects $us$ and has $v$ or $r$ as an endpoint.
Then the other vertex must be in $S_1$, since $u\in R_{rs}$ and $r\in R_{uv}$ we conclude that this edge would have intersected at least one of $uv$ or $rs$ in $H$.
A similar statement holds for any edge that intersects $vr$ and has $u$ or $s$ as a vertex.
Since there is at least one fewer intersection, it follows that $H'$ has fewer intersections than $H$.

This concludes the proof.
\end{proof}

\begin{figure}[h]\label{figParallel}
	\centering
\begin{tikzpicture}
\draw (1,1) circle (1.5cm);
\draw (2.5,1.7) arc (28:-28:1.6cm);
\draw (-.5,1.7) arc (-28:28:-1.55cm);
\draw (.3,2.5) arc (120:60:1.5cm);
\draw (.3,-.5) arc (-120:-60:1.5cm);
\definecolor{white}{rgb}{1.0,1.0,1.0}
\definecolor{black}{rgb}{0.0,0.0,0.0}
\definecolor{red}{rgb}{1.0,0.0,0.0}
\definecolor{blue}{rgb}{0.0,0.0,1.0}
\definecolor{gray}{rgb}{0.5,0.5,0.5}
\tikzset{VertexStyle/.style = { shape=rectangle,
                                   fill         = none,
                                   font = \footnotesize,
                                   minimum size = 10pt,
                                   text         = black,
                                   draw=none}}
\Vertex[LabelOut=false,L=\hbox{$\texttt{$(H)$}$},x=-.20cm,y=-1.0cm]{u}
\Vertex[LabelOut=false,L=\hbox{$\texttt{$S_2$}$},x=-1cm,y=1.0cm]{u0}
\Vertex[LabelOut=false,L=\hbox{$\texttt{$S_3$}$},x=3cm,y=1.0cm]{u1}
\Vertex[LabelOut=false,L=\hbox{$\texttt{$S_1$}$},x=1cm,y=3cm]{u2}
\Vertex[LabelOut=false,L=\hbox{$\texttt{$S_4$}$},x=1cm,y=-1cm]{u3}
\tikzset{VertexStyle/.style = {shape        = circle,
                                   fill         = white,
                                   font = \scriptsize,
                                   minimum size = 5pt,
                                   text         = black,
                                   draw}}
\Vertex[LabelOut=false,L=\hbox{$\texttt{r}$},x=0.0cm,y=0.0cm]{v0}
\Vertex[LabelOut=false,L=\hbox{$\texttt{u}$},x=0.0cm,y=2cm]{v1}
\tikzset{VertexStyle/.style = {shape        = circle,
                                   fill         = white,
                                   font = \scriptsize,
                                   minimum size = 5pt,
                                   text         = black,
                                   draw}}
\Vertex[LabelOut=false,L=\hbox{$\texttt{v}$},x=2cm,y=0.0cm]{v2}
\Vertex[LabelOut=false,L=\hbox{$\texttt{s}$},x=2cm,y=2cm]{v3}
\Edge[lw=0.025cm,style={color=black,},](v0)(v3)
\Edge[lw=0.025cm,style={color=black,},](v1)(v2)
\end{tikzpicture}
\hspace{5mm}
\begin{tikzpicture}
\draw (1,1) circle (1.5cm);
\draw (2.5,1.7) arc (28:-28:1.6cm);
\draw (-.5,1.7) arc (-28:28:-1.55cm);
\draw (.3,2.5) arc (120:60:1.5cm);
\draw (.3,-.5) arc (-120:-60:1.5cm);
\definecolor{white}{rgb}{1.0,1.0,1.0}
\definecolor{black}{rgb}{0.0,0.0,0.0}
\definecolor{red}{rgb}{1.0,0.0,0.0}
\definecolor{blue}{rgb}{0.0,0.0,1.0}
\definecolor{gray}{rgb}{0.5,0.5,0.5}
\tikzset{VertexStyle/.style = { shape=rectangle,
                                   fill         = none,
                                   font = \footnotesize,
                                   minimum size = 10pt,
                                   text         = black,
                                   draw=none}}
\Vertex[LabelOut=false,L=\hbox{$\texttt{$(H')$}$},x=-.20cm,y=-1.0cm]{u}
\Vertex[LabelOut=false,L=\hbox{$\texttt{$S_2$}$},x=-1cm,y=1.0cm]{u0}
\Vertex[LabelOut=false,L=\hbox{$\texttt{$S_3$}$},x=3cm,y=1.0cm]{u1}
\Vertex[LabelOut=false,L=\hbox{$\texttt{$S_1$}$},x=1cm,y=3cm]{u2}
\Vertex[LabelOut=false,L=\hbox{$\texttt{$S_4$}$},x=1cm,y=-1cm]{u3}
\tikzset{VertexStyle/.style = {shape        = circle,
                                   fill         = white,
                                   font = \scriptsize,
                                   minimum size = 5pt,
                                   text         = black,
                                   draw}}
\Vertex[LabelOut=false,L=\hbox{$\texttt{r}$},x=0.0cm,y=0.0cm]{v0}
\Vertex[LabelOut=false,L=\hbox{$\texttt{u}$},x=0.0cm,y=2cm]{v1}
\tikzset{VertexStyle/.style = {shape        = circle,
                                   fill         = white,
                                   font = \scriptsize,
                                   minimum size = 5pt,
                                   text         = black,
                                   draw}}
\Vertex[LabelOut=false,L=\hbox{$\texttt{v}$},x=2cm,y=0.0cm]{v2}
\Vertex[LabelOut=false,L=\hbox{$\texttt{s}$},x=2cm,y=2cm]{v3}
\Edge[lw=0.025cm,style={color=black,},](v0)(v2)
\Edge[lw=0.025cm,style={color=black,},](v1)(v3)
\end{tikzpicture}
	\parbox{\capwidth}{\caption{Regions produced by intersecting edges and non intersecting edges. Where $u,r\in V_i$ and $v,s\in V_{i+1}$.}}
\end{figure}
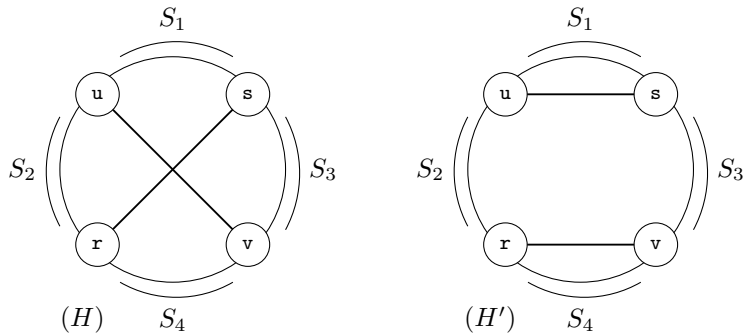

%Lemma about outside edges
\begin{lemma}\label{lemOutsideEdge}
Let $G=(V,E)$, where $V=V_1\cup\ldots\cup V_k$, and $k\geq 3$. Let $H$ be a cycling 2-factor of $G$.
Suppose $u\in V_j$ and $v\in V_{j+1\: mod\: k}$ so that $u$ and $v$ are neighbors on the convex hull, but $uv\notin H$, then there is a cycling 2-factor $H'$ that contains $uv$ and has at most two more intersections than $H$.
\end{lemma}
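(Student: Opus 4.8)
The plan is to build $H'$ from $H$ by a single edge swap and then bound the change in the number of intersections by analysing which old edges the one new interior chord can cross. For the set-up, note that since $u,v$ are neighbours on the convex hull we may relabel so that $R_{uv}=\emptyset$; then every vertex other than $u,v$ lies in $L_{uv}$, $u$ and $v$ are consecutive among the vertices on the circle, and by Observation~\ref{obs1} the edge $uv$ crosses no edge at all. Write $j$ for the index with $u\in V_j$, so $v\in V_{j+1}$ (indices modulo $k$). Because $H$ is a $2$-factor and $k\ge 3$, the cycling condition forces $u$ to have exactly one $H$-neighbour $a\in V_{j+1}$ and one $H$-neighbour $b\in V_{j-1}$, and likewise $v$ to have exactly one $H$-neighbour $c\in V_{j+2}$ and one $H$-neighbour $d\in V_j$. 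From $uv\notin E(H)$ we get $a\ne v$ and $d\ne u$; together with $V_j\ne V_{j+1}$ this makes $u,v,a,d$ pairwise distinct. Finally, applying the cycling condition to $a$ and to $d$ shows that the second $H$-neighbour of $a$ lies in $V_{j+2}$ and the second $H$-neighbour of $d$ lies in $V_{j-1}$.

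Now define $H':=\bigl(E(H)\setminus\{ua,vd\}\bigr)\cup\{uv,ad\}$. I would check that $H'$ is a cycling $2$-factor: neither $uv$ nor $ad$ already belongs to $E(H)$ (for $ad$, since $d\notin N_H(a)$ by the membership facts above), so no loop or parallel edge appears; all degrees stay equal to $2$; and the cycling condition has to be re-verified only at $u,v,a,d$, where it is immediate --- $u$ now meets $v\in V_{j+1}$ and $b\in V_{j-1}$; $v$ now meets $u\in V_j$ and $c\in V_{j+2}$; $a$ now meets $d\in V_j$ and its old $V_{j+2}$-neighbour; $d$ now meets $a\in V_{j+1}$ and its old $V_{j-1}$-neighbour. (It is worth remarking that, up to symmetry, this is the \emph{only} single swap that introduces $uv$ and preserves the cycling property, which is why one should not expect better than a constant increase.)

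For the count, write $e\times f$ for ``$e$ and $f$ cross''. Since $uv$ crosses nothing,
\[
\crn(H')-\crn(H)=\sum_{e\in E(H)\setminus\{ua,vd\}}\bigl(\mathbf{1}[e\times ad]-\mathbf{1}[e\times ua]-\mathbf{1}[e\times vd]\bigr)-\mathbf{1}[ua\times vd].
\]
The whole lemma then reduces to the geometric claim: \emph{every $e\in E(H)\setminus\{ua,vd\}$ with $e\times ad$ is incident to $u$ or to $v$, or else $e\times ua$ or $e\times vd$.} Granting this, every summand coming from an edge $e$ not incident to $u$ or $v$ is $\le 0$; an $e$ incident to $u$ must be $ub$ (as $ua$ was deleted) and an $e$ incident to $v$ must be $vc$, so the sum is at most $\mathbf{1}[ub\times ad]+\mathbf{1}[vc\times ad]\le 2$, and hence $\crn(H')-\crn(H)\le 2$, as required.

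The main obstacle is the geometric claim, and that is where I would concentrate. I would prove it from the cyclic order of the vertices on the circle. Let $P$ be the open arc from $a$ to $d$ avoiding $u$. Because $u$ and $v$ are consecutive on the circle, the chord $ad$ cannot separate them, so $v$ is also on the arc $P'$ complementary to $P$, and $u,v$ are consecutive along $P'$; the chord $ad$ separates $V(P)$ from $V(P')$. If $e=xy$ meets $ad$, say with $x\in V(P)$, then $y\in V(P')$; if $y\in\{u,v\}$ we are done, and otherwise $y$ lies strictly on one side of the consecutive pair $\{u,v\}$ along $P'$ --- either between $d$ and the pair $\{u,v\}$, in which case $x$ and $y$ lie on opposite sides of the chord $vd$, or between $\{u,v\}$ and $a$, in which case they lie on opposite sides of the chord $ua$. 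The key point making the two cases symmetric is that $ua$ and $vd$ jointly ``guard'' both approaches to $\{u,v\}$, so which of $u,v$ is encountered first along $P'$ is irrelevant. Verifying the two separations is a short rotation-scheme computation, which completes the proof.
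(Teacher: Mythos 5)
Your proposal is correct, and its core is identical to the paper's proof: with your $a,d$ in the roles of the paper's $r,s$, you delete $ua$ and $vd$, insert $uv$ and $ad$, and check the cycling condition only at the four affected vertices (your extra observations that $ad\notin E(H)$ and that $u,v,a,d$ are pairwise distinct are correct and in fact slightly more careful than the paper). The only real difference is the bookkeeping. The paper argues in two cases: if $ua$ and $vd$ cross, it appeals to the swap argument of Theorem~\ref{thmParallel} to see the count does not increase; if they do not cross, it partitions the remaining vertices into the three arcs cut off by $ua$, the new edge, and $vd$, bounds the number of edges crossing the new edge by the numbers that crossed $ua$ and $vd$, and concludes that only the at most two surviving edges at $u$ and $v$ can contribute new crossings. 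Your single charging claim --- every edge crossing $ad$ and avoiding $u,v$ already crossed $ua$ or $vd$ --- is the same geometric fact, but you prove it uniformly from the cyclic order, so you never need the case split or Theorem~\ref{thmParallel}; indeed your sub-case in which $v$ is encountered before $u$ along the arc is exactly the paper's crossing case. Both arguments are sound; yours buys a cleaner, unified inequality (and an explicit crossing-difference identity), while the paper's version additionally records that in the crossing case the count strictly decreases, a fact the later applications do not actually require.
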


\begin{proof}
Let $G$, $H$, $u$ and $v$ be as in the statement.
Let $r\in V_{j+1\: (mod\: k)}$ and $s\in V_{j}$ so that $ur,vs\in E(H)$.

Let $H'$ be $H$ with edges $ur$ and $vs$ and without edges $uv$ and $rs$.
Observe that $H'$ is also a cycling 2-factor as the only vertices with different adjacency are $u,v,r$ and $s$ and they still satisfy the cycling condition.

Since $u$ and $v$ are neighbors on the convex hull, without loss of generality, we assume $L_{vu}=\emptyset$.

Assume $ur$ and $vs$ intersect in $H$.
By Theorem \ref{thmParallel} we conclude that the number of intersections of $H'$ is at least one less than the number of intersections of $H$.

Now, assume $ur$ and $vs$ do not intersect in $H$.
See Graph $(H)$ in Figure \ref{figLemma}.
Let $v,s\in R_{ur}$ and $u,r\in L_{vs}$.
This gives a partition of $V\setminus\{u,v,r,s\}$ into $L_{ur}\cup L_{rs}\cup L_{sv}$, (see Figure \ref{figLemma}).
Moreover, since $L_{vu}=\emptyset$, up to relabeling, we get that the sets partition the remaining vertices.
Thus the number of edges that intersect $rs$ can be bounded by,
\begin{align*}
e(L_{rs},R_{rs})&=e(L_{rs},L_{ur})+e(L_{rs},L_{sv})\\
& \leq e(L_{ur},R_{ur})+e(R_{sv},L_{sv}).
\end{align*}
This follows as $e(L_{rs},L_{ur})$ is less than the total number of edges that intersect $ur$ which is $e(L_{ur},R_{ur})$, similarly for $sv$.
This implies that the only way to add new intersections is from edges that contain $u,v,r$ or $s$.
Since $uv$ can not intersect any edges, the only other possibility is $rs$ intersects edges that contain $u$ or $v$.
However, there are at most two such edges, hence there are at most two new intersections. 

Thus, by construction, there is a cycling 2-factor $H'$ with at most two more intersections than $H$ that contains edge $uv$.
\end{proof}

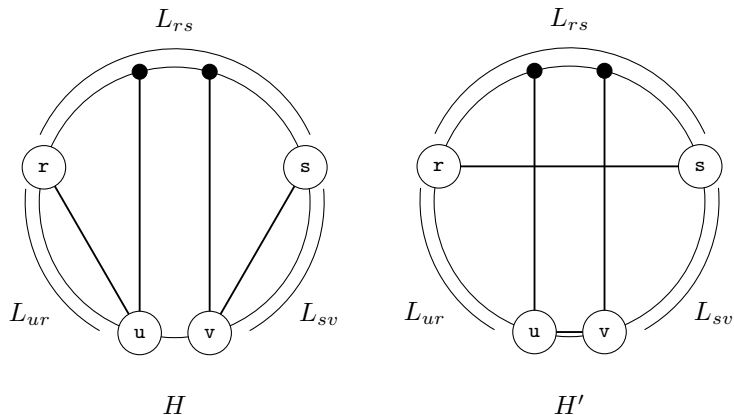
\begin{figure}[width=1.00\textwidth,h]\label{figLemma}
	\centering
	%\captionsetup{width=0.7\textwidth}
\begin{tikzpicture}[scale=.9]
\definecolor{white}{rgb}{1.0,1.0,1.0}
\definecolor{black}{rgb}{0.0,0.0,0.0}
\definecolor{red}{rgb}{1.0,0.0,0.0}
\definecolor{blue}{rgb}{0.0,0.0,1.0}
\definecolor{gray}{rgb}{0.5,0.5,0.5}
\draw (0,0) circle (2cm);
\draw (2,1) arc (25:155:2.2cm);
\draw (2.2,.2) arc (5:-60:2.2cm);
\draw (-2.2,.2) arc (175:240:2.2cm);
\tikzset{VertexStyle/.style = { shape=rectangle,
                                   fill         = none,
                                   minimum size = 10pt,
                                   font = \footnotesize,
                                   text         = black,
                                   draw=none}}
\Vertex[LabelOut=false,L=\hbox{$\texttt{$H$}$},x=0cm,y=-3.0cm]{u}                                  
\Vertex[LabelOut=false,L=\hbox{$\texttt{$L_{ur}$}$},x=-2.138cm,y=-1.647cm]{u0}
\Vertex[LabelOut=false,L=\hbox{$\texttt{$L_{rs}$}$},x=0.0cm,y=2.7cm]{u1}
\Vertex[LabelOut=false,L=\hbox{$\texttt{$L_{sv}$}$},x=2.138cm,y=-1.647cm]{u2}
\tikzset{VertexStyle/.style = {shape        = circle,
                                   fill         = white,
                                   font = \scriptsize,
                                   minimum size = 5pt,
                                   text         = black,
                                   draw}}
\Vertex[LabelOut=false,L=\hbox{$\texttt{v}$},x=.517cm,y=-1.931cm]{v0}
\Vertex[LabelOut=false,L=\hbox{$\texttt{r}$},x=-1.931cm,y=.518cm]{v1}
\Vertex[LabelOut=false,L=\hbox{$\texttt{u}$},x=-.517cm,y=-1.931cm]{v2}
\Vertex[LabelOut=false,L=\hbox{$\texttt{s}$},x=1.931cm,y=.518cm]{v3}
\tikzset{VertexStyle/.style = {shape        = circle,
                                   fill         = black,
                                   font = \scriptsize,
                                   inner sep = 2pt,
                                   minimum size = 5pt,
                                   text         = black,
                                   draw}}
\Vertex[LabelOut=false,L=\hbox{$\texttt{}$},x=-.517cm,y=1.931cm]{v4}
\Vertex[LabelOut=false,L=\hbox{$\texttt{}$},x=.517cm,y=1.931cm]{v5}
%
%\Edge[lw=0.025cm,style={color=black,},](v0)(v2)
%\Edge[lw=0.025cm,style={color=black,},](v1)(v3)
\Edge[lw=0.025cm,style={color=black,},](v0)(v3)
\Edge[lw=0.025cm,style={color=black,},](v1)(v2)
\Edge[lw=0.025cm,style={color=black,},](v0)(v5)
\Edge[lw=0.025cm,style={color=black,},](v2)(v4)
\end{tikzpicture}
\hspace{3mm}
\begin{tikzpicture}[scale=.9]
\definecolor{white}{rgb}{1.0,1.0,1.0}
\definecolor{black}{rgb}{0.0,0.0,0.0}
\definecolor{red}{rgb}{1.0,0.0,0.0}
\definecolor{blue}{rgb}{0.0,0.0,1.0}
\definecolor{gray}{rgb}{0.5,0.5,0.5}
\draw (0,0) circle (2cm);
\draw (2,1) arc (25:155:2.2cm);
\draw (2.2,.2) arc (5:-60:2.2cm);
\draw (-2.2,.2) arc (175:240:2.2cm);
\tikzset{VertexStyle/.style = { shape=rectangle,
                                   fill         = none,
                                   minimum size = 10pt,
                                   font = \footnotesize,
                                   text         = black,
                                   draw=none}}
\Vertex[LabelOut=false,L=\hbox{$\texttt{$H'$}$},x=0cm,y=-3.0cm]{u}                                  
\Vertex[LabelOut=false,L=\hbox{$\texttt{$L_{ur}$}$},x=-2.138cm,y=-1.647cm]{u0}
\Vertex[LabelOut=false,L=\hbox{$\texttt{$L_{rs}$}$},x=0.0cm,y=2.7cm]{u1}
\Vertex[LabelOut=false,L=\hbox{$\texttt{$L_{sv}$}$},x=2.138cm,y=-1.647cm]{u2}
\tikzset{VertexStyle/.style = {shape        = circle,
                                   fill         = white,
                                   font = \scriptsize,
                                   minimum size = 5pt,
                                   text         = black,
                                   draw}}
\Vertex[LabelOut=false,L=\hbox{$\texttt{v}$},x=.517cm,y=-1.931cm]{v0}
\Vertex[LabelOut=false,L=\hbox{$\texttt{r}$},x=-1.931cm,y=.518cm]{v1}
\Vertex[LabelOut=false,L=\hbox{$\texttt{u}$},x=-.517cm,y=-1.931cm]{v2}
\Vertex[LabelOut=false,L=\hbox{$\texttt{s}$},x=1.931cm,y=.518cm]{v3}
\tikzset{VertexStyle/.style = {shape        = circle,
                                   fill         = black,
                                   font = \scriptsize,
                                   inner sep = 2pt,
                                   minimum size = 5pt,
                                   text         = black,
                                   draw}}
\Vertex[LabelOut=false,L=\hbox{$\texttt{}$},x=-.517cm,y=1.931cm]{v4}
\Vertex[LabelOut=false,L=\hbox{$\texttt{}$},x=.517cm,y=1.931cm]{v5}
\Edge[lw=0.025cm,style={color=black,},](v0)(v2)
\Edge[lw=0.025cm,style={color=black,},](v1)(v3)
%\Edge[lw=0.025cm,style={color=black,},](v0)(v3)
%\Edge[lw=0.025cm,style={color=black,},](v1)(v2)
\Edge[lw=0.025cm,style={color=black,},](v0)(v5)
\Edge[lw=0.025cm,style={color=black,},](v2)(v4)
\end{tikzpicture}
	\parbox{\capwidth}{\caption{Replacing $ur$ and $sv$ with $uv$ and $rs$ adds at most two more intersections.}}
\end{figure}

%Lemma about no intersecting 2-factors
\begin{lemma}\label{lemSeparatingCycles}
Let $G=(V,E)$, with $V=V_1\cup\ldots\cup V_k$, and $k\geq 3$, and suppose $G$ contains a cycling 2-factor with no intersections.
Then $G$ has a cycling 2-factor with no intersections consisting only of $k$-cycles.
\end{lemma}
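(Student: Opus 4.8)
The plan is to cut a crossing‑free cycling 2‑factor into $k$‑cycles one convex block at a time. Throughout I work in the setting of the paper, where $G\cong K_{kn}$ and the vertices are in convex position, so every pair of vertices is joined by an edge and no three vertices are collinear. Let $H$ be a cycling 2‑factor of $G$ with no intersections.

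First I would record the combinatorial shape of a single cycle of $H$. In a 2‑factor every vertex has degree exactly $2$, while the cycling condition forces a vertex of $V_i$ to be adjacent to one vertex of $V_{i+1}$ and one vertex of $V_{i-1}$; since $k\ge 3$ these two parts are distinct, so along any cycle $C$ of $H$, once a direction of travel is fixed, the part‑index increases by exactly $1$ at every step. Hence $|C|=mk$ for some $m\ge 1$, and any $k$ consecutive vertices of $C$ lie one in each part. Next I use the geometry: because $C$ is crossing‑free and $V(C)$ is in convex position, $C$ must be the boundary of the convex polygon $P_C=\operatorname{conv}(V(C))$, so writing $C=(c_1c_2\cdots c_{mk})$ in traversal order, the list $c_1,\dots,c_{mk}$ is exactly the cyclic order of $V(C)$ around its convex hull. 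I also need a separation fact: if $C$ and $D$ are distinct cycles of $H$, then $\overline{P_C}\cap\overline{P_D}=\emptyset$. Indeed, every vertex of $D$ lies strictly outside $P_C$ by convex position, and no edge of $D$ can cross or even touch $\partial P_C=C$ (no two edges of $H$ cross, and no three points are collinear); the two disjoint convex polygons are therefore unlinked, since otherwise one of them would contain the other and hence contain a vertex of $V$ in its interior.

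With this in hand, for each cycle $C=(c_1\cdots c_{mk})$ of $H$ define the sub‑cycles
\[
C^{(j)}=\bigl(c_{(j-1)k+1}\ c_{(j-1)k+2}\ \cdots\ c_{jk}\bigr),\qquad j=1,\dots,m,
\]
and let $H'$ be obtained from $H$ by replacing every cycle $C$ with its sub‑cycles $C^{(1)},\dots,C^{(m)}$ (nothing changes when $m=1$). Since $G\cong K_{kn}$ all the required edges exist, and $H'$ is spanning and $2$‑regular. Each $C^{(j)}$ has exactly one vertex in each part and consecutive vertices in consecutive parts — the closing edge $c_{jk}c_{(j-1)k+1}$ joins vertices whose parts are consecutive mod $k$ — so $H'$ is a cycling 2‑factor all of whose cycles have length $k$. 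It remains to check that $H'$ has no intersections. Within a single $C^{(j)}$: its vertices form a contiguous arc of the hull order of $V(C)$ and are visited in that order, so $C^{(j)}$ bounds a convex polygon and has no self‑intersection. Between $C^{(j)}$ and $C^{(j')}$ with $j\ne j'$: their vertex sets are disjoint, non‑interleaved arcs, so the two convex polygons are disjoint. Between an edge of $C^{(j)}$ and an edge of a sub‑cycle $D^{(l)}$ coming from a different cycle $D$ of $H$: the first lies in $\overline{P_C}$ (both endpoints are in $V(C)$) and the second lies in $\overline{P_D}$, and $\overline{P_C}\cap\overline{P_D}=\emptyset$ by the separation fact. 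Hence no two edges of $H'$ cross, and $H'$ is the required 2‑factor.

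The step I expect to demand the most care is the geometric one: making rigorous that a crossing‑free cycle on points in convex position really is the convex‑hull polygon, so that the decomposition into contiguous arcs of $k$ vertices is legitimate, and verifying that the genuinely new ``closing'' chords $c_{jk}c_{(j-1)k+1}$ — which are not edges of $H$ — stay inside $\overline{P_C}$ and therefore cannot meet any untouched part of $H$. The index‑monotonicity observation is the conceptual engine here: it is exactly what makes the naive recipe ``split the cycle into consecutive blocks of $k$ hull vertices'' automatically produce cycling sub‑cycles.
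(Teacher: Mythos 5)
Your proposal is correct and rests on the same core idea as the paper's proof: split any cycle of length greater than $k$ by cutting off $k$ consecutive vertices, i.e.\ replace $x_0x_1$ and $x_kx_{k+1}$ by the chords $x_1x_k$ and $x_0x_{k+1}$ (you simply perform all such cuts at once rather than arguing by a minimal counterexample). The only real difference is one of rigor: the paper's exchange argument leaves implicit the check that the new 2-factor is still cycling and crossing-free, and your hull-boundary and disjoint-hull facts supply exactly that verification.
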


\begin{proof}
Let $G$ be as above, and suppose $G$ contains a cycling 2-factor with no intersections.
Let $H$ be a cycling 2-factor with with no intersections.
Suppose that $H$ is the 2-factor with the smallest maximum length cycle and the smallest number of the longest cycles.
Observe that since $H$ has no intersections, each cycle is disjoint.
Thus, we can study each cycle separately.

Suppose $C$ is a cycle length greater than $k$.
Let $x_1,\ldots,x_k$ be a path of $C$, and let $x_0$ be adjacent to $x_1$ and $x_k$ adjacent to $x_{k+1}$ in $C$.
Let $H'$ be $H$ with edges $x_1x_k$ and $x_0x_{k+1}$ and without edges $x_0x_1$ and $x_k x_{k+1}$.
This produces two cycles with length smaller than the length of $C$.

As this contradicts the choice of $H$, we conclude that $H$ did not have any cycles of length greater than $k$.
\end{proof}

\section{Upper Bounds}
We define a {\em transposition} $(u,v)$ of a graph embedding $G$ to be an embedding $\hat{G}$ that changes the embedding of $G$ by switching the coordinates of $u$ and $v$.
When performing this switch with a cycling 2-factor, $H$, we will assume that the new cycling 2-factor has the same edges.
That is, adjacent vertices do not change in $H$, just the embedding of $G$.

\begin{definition}
Let $f(G)$ be the minimum number of transpositions $(u,v)$, where $u$ and $v$ are neighbors on the convex hull, required to convert $G$ into an embedding with a cycling 2-factor with no intersections.
\end{definition}

%Upper bound theorem
\begin{theorem}\label{thmUpperBound}
Let $G=(V,E)$, where $V=V_1\cup\ldots\cup V_k$, and $k\geq 3$, and let $H$ be a cycling 2-factor of $G$ with the minimum number of intersections.
Then, the number of intersections of $H$ is bounded above by $4 f(G)$.
Moreover, for $k=3$, the number of intersections of $H$ is bounded above by $3 f(G)$.
\end{theorem}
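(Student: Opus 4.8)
The plan is to induct on $f(G)$, peeling off one convex-hull transposition at a time. If $f(G)$ is infinite the bound is vacuous, and if $f(G)=0$ then $G$ already carries a crossing-free cycling 2-factor, so its minimum cycling 2-factor $H$ has no intersections and $0\le 4\cdot 0$. For the inductive step I fix a shortest sequence $(u_1,v_1),\dots,(u_f,v_f)$ of transpositions of convex-hull neighbours carrying $G$ to an embedding with a crossing-free cycling 2-factor, and let $\hat G$ be the embedding obtained from $G$ by the single transposition $(u_1,v_1)$. Two preliminary points: first, transposing two convex-hull neighbours keeps them convex-hull neighbours and moves no other vertex, so $(u_1,v_1)$ is also a legal transposition of $\hat G$, and since ``cycling'' depends only on the vertex partition, $\hat G$ has exactly the same cycling 2-factors as $G$; second, $f(\hat G)=f(G)-1$ (the inequality $\le$ from the tail $(u_2,v_2),\dots,(u_f,v_f)$, the inequality $\ge$ from prepending $(u_1,v_1)$ to an optimal sequence for $\hat G$). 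Let $\hat H$ be a minimum cycling 2-factor of $\hat G$; by induction the number of intersections of $\hat H$ in $\hat G$ is at most $4(f-1)$ (at most $3(f-1)$ when $k=3$). Throughout write $\ucr_{G}(F)$ for the number of intersections of a 2-factor $F$ in an embedding $G$.

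The engine is an elementary estimate on a single transposition of convex-hull neighbours $u,v$, using the half-plane picture of Observation~\ref{obs1}: only $u$ and $v$ move, and as $u$ slides into $v$'s location it sweeps past no vertex except $v$ (and symmetrically), so the only intersections of a fixed 2-factor whose status can change are those between an edge at $u$ and an edge at $v$. If $uv$ lies in the 2-factor there is just one such pair, so the number of intersections changes by at most $1$; otherwise there are at most four such pairs, so it changes by at most $4$, and — using that in convex position no vertex lies in the open triangle on $u$, $v$ and a third vertex — it changes by exactly $4$ only if the four edges at $u$ and $v$ are distinct and each of the four pairs passes from a non-crossing to a crossing. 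Reading $\hat H$ inside the embedding $G$ (which is $\hat G$ transposed at $u_1,v_1$) and taking $H^{\ast}=\hat H$ already yields $\ucr_G(H)\le\ucr_G(\hat H)\le 4(f-1)+4=4f$, which proves the first statement.

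For $k=3$ I must recover one unit of slack whenever undoing $(u_1,v_1)$ would add the full $4$ to $\hat H$. In that situation $u_1v_1\notin\hat H$, the four edges $u_1\alpha_1,u_1\alpha_2,v_1\beta_1,v_1\beta_2$ of $\hat H$ are distinct, and all four corresponding pairs cross in $G$. If $u_1$ and $v_1$ lie in different parts, then—since for $k=3$ any two parts are consecutive—Lemma~\ref{lemOutsideEdge} applied inside $\hat G$ gives a cycling 2-factor $\hat H'$ of $\hat G$ containing the convex-hull edge $u_1v_1$ with at most $\ucr_{\hat G}(\hat H)+2$ intersections; because $\hat H'$ contains the hull edge $u_1v_1$, undoing $(u_1,v_1)$ changes its count by at most $1$, so $\ucr_G(\hat H')\le 3(f-1)+3=3f$. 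If instead $u_1,v_1$ lie in one part $V_a$, let $\alpha\in V_{a+1}$ and $\beta\in V_{a+1}$ be the neighbours of $u_1$ and of $v_1$, respectively, in part $V_{a+1}$; then $u_1\alpha$ and $v_1\beta$ cross in $G$, with both ``$u$-ends'' in $V_a$ and both ``$v$-ends'' in $V_{a+1}$, so the exchange in the proof of Theorem~\ref{thmParallel}—replacing $u_1\alpha,v_1\beta$ by $u_1\beta,v_1\alpha$—produces a cycling 2-factor $\hat H'$ of $G$ with strictly fewer intersections than $\hat H$, hence at most $\ucr_{\hat G}(\hat H)+4-1\le 3f$. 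In either case $\ucr_G(H)\le\ucr_G(\hat H')\le 3f$, closing the induction.

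I expect the delicate point to be the $k=3$ case, specifically the verification that a ``$+4$'' transposition always exposes a crossing pair of the shape required by Theorem~\ref{thmParallel}, or an edge admitting Lemma~\ref{lemOutsideEdge}: once that is in hand, the fact that these exchanges introduce no new intersections is exactly what the proofs of those two results already supply. So the real work is the case analysis on where $u_1$ and $v_1$ sit in the partition, together with the precise characterisation of when one hull transposition adds $4$ to a fixed 2-factor. (For $k\ge 4$ distinct parts need not be consecutive, which is why only $k=3$ improves $4f(G)$ to $3f(G)$.)
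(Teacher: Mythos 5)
Your proof is correct and follows essentially the same route as the paper: undo an optimal transposition sequence, charging at most $4$ per transposition of hull neighbours (at most $1$ when the edge $uv$ lies in the 2-factor), and for $k=3$ using Lemma~\ref{lemOutsideEdge} to install the hull edge $uv$ at cost at most $2$ before transposing, for a total of $3$ per step. The only divergence is that the paper dismisses same-part transpositions by asserting a minimal sequence never uses them, whereas you handle that case directly via the uncrossing exchange from Theorem~\ref{thmParallel} in the ``$+4$'' situation --- a harmless, and in fact slightly more careful, variant of the same argument.
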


\begin{proof}
Observe that the transposition $(u,v)$ for $u$ and $v$ neighbors on the convex hull will not change $L_{rs}$ or $R_{rs}$ for all edges $rs$ disjoint from $\{u,v\}$.
Thus, this operation will not add any intersections between edges that do not contain $u$ or $v$ as an end point.
There are at most four edges containing $u$ or $v$ thus, each transposition adds at most four intersections.
Starting with a cycling 2-factor with no intersections, apply the transpositions in reverse order adding at most four intersections per transposition to get a cycling 2-factor of $G$ with at most $4f(G)$ intersections.
Thus, the minimum number of intersections is bounded above by $4f(G)$.

Now, assume $k=3$.
Observe that if $(u,v)$ is a transposition in the minimum sequence, then $u$ and $v$ would not be in the same part of $V$.
Assume that the minimum sequence of transpositions includes $(u,v)$.
Note that $u\in V_i$, $v\in V_j$ implies $i=j\pm1\:(mod\: 3)$.
Hence, by Lemma \ref{lemOutsideEdge}, there is a cycling 2-factor, $H'$, with at most two more intersections than the current cycling 2-factor that uses $uv$.
In $H'$, $u$ and $v$ have two edges that could intersect adding at most one more intersection when $(u,v)$ is applied to $H'$.
Starting with a cycling 2-factor with no intersections and apply the transpositions in reverse order to the cycling 2-factor that is produced by Lemma \ref{lemOutsideEdge}. 
Thus, the minimum number of intersections is bounded above by $3f(G)$ when $k=3$.
\end{proof}

%Theorem about tightness.
\begin{theorem}\label{thmNumericalBound}
Let $G=(V,E)$, $V=V_1\cup V_2\cup V_3$, and let $H$ be a cycling 2-factor of $G$.
Then the number of intersections of $H$ is bounded above by $\frac{3n(n-1)}{2}$ where $|V_1|=n$.
\end{theorem}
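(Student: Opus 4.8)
The plan is to reduce the claim to the bound $f(G)\le\binom n2$ and then invoke the previous section. By Theorem~\ref{thmUpperBound} with $k=3$, a cycling 2-factor of $G$ with the minimum number of intersections has at most $3f(G)$ intersections, so it suffices to show $f(G)\le\binom n2=\tfrac{n(n-1)}2$; here I read the theorem as bounding that minimum (the quantity studied throughout the section), since, e.g., Figure~\ref{figExamples}$(a)$ is a cycling 2-factor with $4>3$ intersections when $n=2$. Call an embedding \emph{sortable} if it has a crossing-free cycling 2-factor. By Lemma~\ref{lemSeparatingCycles} this is the same as having a crossing-free cycling 2-factor all of whose cycles are triangles, and such an object is exactly a non-crossing partition of the $3n$ convex points into $n$ triples each using one vertex of every $V_i$. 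Hence sortability depends only on the cyclic word of colors around the hull, and the goal becomes: every cyclic word with $n$ letters of each of three colors can be turned into a sortable word using at most $\binom n2$ transpositions of cyclically adjacent letters.

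I would prove this by induction on $n$, peeling off rainbow ``ears.'' If the current word already has three cyclically consecutive letters of three distinct colors, those three vertices span a triangle that crosses nothing; fix them and apply the induction hypothesis to the remaining $3(n-1)$ vertices, at cost $\le\binom{n-1}2\le\binom n2$, then reattach the ear to get a sortable word. If not, I first create such an ear. The tool is a window lemma: among $3n$ points on the hull with $n$ of each color, some $n+2$ cyclically consecutive positions contain all three colors. This has a one-line count: there are $3n$ windows of length $n+2$, and a window misses color $c$ only if it lies inside a maximal block of non-$c$ positions; since there are $2n$ non-$c$ positions, at most one such block has length exceeding $n+1$, so at most $(2n)-(n+1)=n-1$ windows miss color $c$, giving at most $3(n-1)<3n$ windows missing some color. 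Given such a window with vertices of the three colors at positions $q_1<q_2<q_3$ inside it, I collapse it: slide the $q_2-q_1-1$ vertices lying between $q_1,q_2$ past $q_1$, then slide the $q_3-q_2-1$ vertices lying between $q_2,q_3$ past $q_3$. This uses $q_3-q_1-2\le(n+2)-3=n-1$ adjacent transpositions and leaves $q_1,q_2,q_3$ cyclically consecutive, i.e.\ an ear, after which the induction proceeds with total cost $\le(n-1)+\binom{n-1}2=\binom n2$.

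The delicate point -- and the one I expect to be the real obstacle -- is that the recursion does not literally stay on a circle: once the first ear is fixed, the remaining $3(n-1)$ vertices lie on an \emph{arc}, and the transpositions allowed are still only between vertices adjacent on the original hull, so one cannot transpose across the ear. I therefore need the linear analogue of the window lemma: among $3m$ points on an arc with $m$ of each color, some $m+2$ consecutive positions contain all three colors -- which is sharp (take $1^m2^m3^m$) and cannot afford any slack, since the per-level budget is exactly $m-1$. The naive count is too weak in the linear case (there are only $2m-1$ windows), so one needs a short case analysis showing that the long ``complementary blocks'' of the three colors cannot all be pushed against the two ends of the arc, hence not every window can be monochromatic-deficient. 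One must also check the routine fact that the transpositions used in a recursive call move only the $3(n-1)$ arc vertices among themselves and so do not disturb the previously peeled ears. Granting the linear window lemma, these pieces assemble to $f(G)\le\binom n2$, and Theorem~\ref{thmUpperBound} then yields the bound $\tfrac{3n(n-1)}2$.
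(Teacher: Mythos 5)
Your reduction to $f(G)\le\binom{n}{2}$ via Theorem~\ref{thmUpperBound} and your first peeling step are fine, but the induction does not stay in a setting where either of your window lemmas applies, and this is a genuine gap rather than a routine check. Only at the second level do the remaining vertices form a single arc: once you fix an ear in the \emph{interior} of that arc (and your own sharp example $1^m2^m3^m$ shows you cannot afford to create the ear at an end of the arc), the remaining vertices split into \emph{two} arcs separated by fixed ears, and those arcs individually do not have balanced color counts, so neither the cyclic count nor the stated linear lemma is available for the next level. Worse, a rainbow triple inside a single arc need not exist at all in the states your greedy reaches: for instance the remaining vertices can read $2,1$ on one arc and $3$ on the other, at which point no further consecutive rainbow ear can ever be formed, even though the configuration is in fact already sortable --- precisely because the triangles of a crossing-free cycling 2-factor need not use hull-consecutive vertices. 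So the invariant you are implicitly inducting on (``an arc with balanced counts can be rearranged, within budget, into a concatenation of consecutive rainbow triples'') is both too strong and not self-reproducing, and the argument as written does not close.

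Two further remarks. First, the step you flagged as the real obstacle is actually the easy one: a shortest factor containing all three colors must have the form $a\,c^{j}\,b$ with $a\neq b$ and interior a run of the third color (otherwise it could be shortened from an end), hence has length $j+2\le m+2$ because only $m$ letters of color $c$ exist; this works verbatim on an arc, with no case analysis about complementary blocks. Second, the paper sidesteps your structural problem entirely by \emph{not} inducting on transposition sequences inside the fixed ambient embedding: it inducts on the minimum number of crossings of convex-position instances. It takes a hull edge $u_1u_2$ with ends in two different parts, moves the nearest third-part vertex $u_3$ next to it with at most $n$ hull-neighbor transpositions, applies the induction hypothesis directly to $G\setminus\{u_1,u_2,u_3\}$ --- which is again a full convex-position (``circle'') instance, so arcs never arise --- inserts the now-consecutive triangle $u_1u_2u_3$ without crossings, and finally undoes the at most $n$ transpositions at cost $3$ each via Lemma~\ref{lemOutsideEdge} and the method of Theorem~\ref{thmUpperBound}, yielding $\frac{3n(n-1)}{2}+3n=\frac{3n(n+1)}{2}$. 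Recasting your peeling argument in that form (treat each peeled ear as an actual triangle of the 2-factor and recurse on the smaller polygon, rather than tracking a word-sorting quantity with frozen ears) repairs the proof; Lemma~\ref{lemSeparatingCycles} is then not needed at all.
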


\begin{proof}
Let $G$ be as in the statement.
Proof by induction on $|V_1|=n$.
We will construct a sequence of transpositions $(u,v)$ of neighbors on the convex hull of length at most $\frac{n(n-1)}{2}$.

If $|V_1|=1$ then $G\cong C_3$, and the only cycling 2-factor of $G$ has no intersections.
Now, assume that for any graph $G$ with $|V_1|=n$, there is a cycling 2-factor with at most $\frac{3n(n-1)}{2}$ intersections.
Assume $G$ has $|V_1|=n+1$.
Observe that there are vertices $u_1\in V_1$ and $u_2\in V_2$ such that $u_1$ and $u_2$ are neighbors on the convex hull.
Let $u_3\in V_3$ be the closest vertex in cyclic order along the convex hull, let the transpositions $(u_3,t_1),(u_3,t_2),\ldots,(u_3,t_i)$ give an embedding where $u_3$ is a neighbor of either $u_1$ or $u_2$.
Observe that at most $n$ transpositions will be needed.

Using the inductive hypothesis, we construct a cycling 2-factor on $G\setminus\{u_1,u_2,u_3\}$ with at most $\frac{3n(n-1)}{2}$ intersections.
Now, write $G$ with $u_1,u_2$ and $u_3$ as neighbors on the convex hull, with $u_1$ and $u_2$ in their original position in $G$.
Apply the transpositions $(u_3,t_i),(u_3,t_{i-1}),\ldots,(u_3,t_1)$, in order.
By Lemma \ref{lemOutsideEdge} and the method used in Theorem \ref{thmUpperBound} we can construct a cycling 2-factor of $G$ with at most $\frac{3n(n-1)}{2}+3n=\frac{3(n+1)n}{2}$ intersections as desired.

Thus, we have an upper bound on the intersections of $H$ of $\frac{3n(n-1)}{2}$ where $|V_1|=n$.
\end{proof}

\begin{corollary}
For each $n$ there is a unique embedding of $G\cong K_{3n}$, $V=V_1\cup V_2\cup V_3$, $|V_i|=n$, $i=1,2,3$ where the minimum number of intersections of a cycling two factor is $\frac{3n(n-1)}{2}$.
\end{corollary}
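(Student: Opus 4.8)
The plan is to identify the extremal embedding explicitly as the \emph{clustered} embedding $G^\ast$: arrange the $3n$ vertices on the circle so that $V_1$, $V_2$, $V_3$ occupy three consecutive arcs (this is unique up to rotation and reflection of the circle, and relabelling vertices within a class). Writing $m(G)$ for the least number of intersections of a cycling 2-factor of $G$, I must show two things: that $m(G^\ast)=\tfrac{3n(n-1)}{2}$, and that $m(G)<\tfrac{3n(n-1)}{2}$ for every other embedding $G$. Theorem~\ref{thmNumericalBound} already gives $m(G)\le\tfrac{3n(n-1)}{2}$ for all $G$, so for the first part only the matching lower bound for $G^\ast$ is needed.

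For $m(G^\ast)$ I would use that a cycling 2-factor of $K_{3n}$ with this tripartition is exactly a triple $(M_{12},M_{23},M_{31})$ of perfect matchings, one between each pair of consecutive classes (each $V_1$-vertex carries one $V_1V_2$-edge and one $V_3V_1$-edge, and so on). Put the classes at positions $1,\dots,n$, then $n+1,\dots,2n$, then $2n+1,\dots,3n$, and encode the matchings by permutations, $M_{12}=\{a_ib_{\pi(i)}\}$, $M_{23}=\{b_jc_{\rho(j)}\}$, $M_{31}=\{c_la_{\tau(l)}\}$. A direct application of Observation~\ref{obs1} then gives: two edges of $M_{12}$ cross iff their indices appear in the same order, so the number of $M_{12}$--$M_{12}$ crossings is $A(\pi):=\#\{i<j:\pi(i)<\pi(j)\}\ge 0$, with equality exactly when $\pi$ is the reversal $i\mapsto n+1-i$; and $a_ib_{\pi(i)}$ crosses $b_jc_{\rho(j)}$ iff $j<\pi(i)$, so the number of $M_{12}$--$M_{23}$ crossings is $\sum_i(\pi(i)-1)=\binom n2$, independent of the 2-factor. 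The analogous computations for the other two blocks (or the rotation of the circle by $n$, which cyclically relabels the classes and preserves crossings) give $A(\rho)$, $A(\tau)$ for the remaining within-block counts and $\binom n2$ for each of the remaining between-block counts. Hence every cycling 2-factor of $G^\ast$ has exactly $3\binom n2+A(\pi)+A(\rho)+A(\tau)$ crossings, so $m(G^\ast)=3\binom n2=\tfrac{3n(n-1)}{2}$, attained only by the ``antipodal'' 2-factor with $\pi=\rho=\tau$ the reversal; in particular this reproves the $n=2$ uniqueness remark of Section~1.

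For uniqueness I would run the construction in the proof of Theorem~\ref{thmNumericalBound} backwards. For any embedding $G$ and any hull-adjacent pair $u_1,u_2$ in two different classes, with third class $W$, let $u_3$ be the closest vertex of $W$ to $\{u_1,u_2\}$ and $i$ the number of hull-transpositions needed to bring $u_3$ next to $u_1$ or $u_2$; then that construction (attach the free triangle $u_1u_2u_3$ to an optimal 2-factor of $G\setminus\{u_1,u_2,u_3\}$, undo the transpositions via Lemma~\ref{lemOutsideEdge} and the method of Theorem~\ref{thmUpperBound}) yields $m(G)\le m(G\setminus\{u_1,u_2,u_3\})+3i$, while a pigeonhole count on the $3n-2$ positions other than $u_1,u_2$ (of which $n$ lie in $W$) forces $i\le n-1$. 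Since $m(G\setminus\{u_1,u_2,u_3\})\le\tfrac{3(n-1)(n-2)}{2}$ by Theorem~\ref{thmNumericalBound} and $\tfrac{3n(n-1)}{2}=\tfrac{3(n-1)(n-2)}{2}+3(n-1)$, the hypothesis $m(G)=\tfrac{3n(n-1)}{2}$ forces $i=n-1$ at \emph{every} colour-transition of the hull. But $i=n-1$ means the $n-1$ positions on each side of the edge $u_1u_2$ avoid $W$, and since $|V\setminus W|=2n$ this pins $W$ down to be exactly the length-$n$ arc opposite $u_1u_2$. Feeding this back in: if colour-transitions of all three types occur then all three classes are arcs and $G\cong G^\ast$; if only two types occur the classes can only form a pattern $[V_a][V_b'][V_c][V_b'']$, and then comparing the transition at the $V_a$--$V_b'$ junction with its forced opposite arc (which would have to be all of $V_c$) gives $V_b''=\varnothing$, so again $G\cong G^\ast$.

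The main obstacle I anticipate is precisely this last rigidity step—turning ``$i=n-1$ at every colour-transition'' into ``every class is an arc.'' The split-class configuration above must be ruled out carefully, and one must also check that the inequality $m(G)\le m(G\setminus\{u_1,u_2,u_3\})+3i$ really does hold for \emph{every} admissible pair $u_1,u_2$ (with the appropriate relabelling of which class is $W$), not merely the single pair chosen in the proof of Theorem~\ref{thmNumericalBound}, so that extremality can be exploited simultaneously at every junction of the convex hull.
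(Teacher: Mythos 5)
Your proposal is correct, and it differs from the paper's proof in one substantive place while coinciding with it in the other. For the value $\frac{3n(n-1)}{2}$ at the clustered embedding, the paper invokes Theorem~\ref{thmParallel}: in that embedding exactly one cycling 2-factor avoids crossings between ``parallel'' edges, so it must be the minimizer, and its crossings are then counted directly. You instead encode an arbitrary cycling 2-factor as three permutations and show via Observation~\ref{obs1} that its crossing number is exactly $3\binom{n}{2}$ plus the non-inversion counts $A(\pi)+A(\rho)+A(\tau)$; this is more work but more informative, since it gives a lower bound valid for every cycling 2-factor of that embedding (not just the one surviving Theorem~\ref{thmParallel}), shows the between-class crossing count is invariant, and identifies the antipodal 2-factor as the unique minimizer. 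For uniqueness of the embedding, your argument and the paper's are essentially the same: extremality together with the inductive construction of Theorem~\ref{thmNumericalBound} forces exactly $n-1$ transpositions at a hull junction between two classes, which pins the third class to the length-$n$ arc opposite that junction; the paper argues from a single $V_1$--$V_2$ junction and rules out interleaving of $V_1$ and $V_2$ directly, whereas you apply the forcing at every junction and split into the ``three transition types'' versus ``two transition types'' cases. The two points you flag as needing care do go through: the inequality $m(G)\le m(G\setminus\{u_1,u_2,u_3\})+3i$ holds at every hull junction with endpoints in distinct classes, by running the paper's own construction (attach the crossing-free triangle, then undo the transpositions using Lemma~\ref{lemOutsideEdge} and the $k=3$ case of Theorem~\ref{thmUpperBound}) at that junction, and your pigeonhole ($i\le n-1$, with equality only when the third class occupies exactly the opposite arc) is precisely what eliminates the $[V_a][V_b'][V_c][V_b'']$ configuration, since at the $V_a$--$V_b'$ junction one finds $i=|V_b'|-1<n-1$ unless $V_b''=\varnothing$.
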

\begin{proof}
Let $V=\{v_1,\ldots,v_{3n}\}$ be in cyclic order.
Set $V_1=\{v_1,\ldots,v_n\}$, $V_2=\{v_{n+1},\ldots,v_{2n}\}$, and $V_3=\{v_{2n+1},\ldots,v_{3n}\}$.
There is a unique set of edges that satisfy the condition in Theorem \ref{thmParallel}, and, thus, must have the minimal number of intersections.
Namely, $v_{i}$ is adjacent to $v_{2n+1-i}$ and $v_{3n+1-i}$ for $i=1,\ldots,n$.
The other edge set has $v_{n+i}$ adjacent to $v_{3n+1-i}$ for $i=1,\ldots,n$.
In Figure \ref{figExamples}, Graph $(b)$ is the case when $n=2$.

Given an edge $uv$ in this cycling 2-factor, where $u\in V_1$ and $v\in V_2$, the only edges in $E(V_1,V_3)$ that intersect this edge will have a vertex in $V_1$ with higher index then $u$ in the cyclic order, the only edges in $E(V_2,V_3)$ that intersect this edge have a vertex in $V_2$ with smaller index then $v$ in the cyclic order.
This gives a total of $n(n-1)$ intersections of edges in $E(V_1,V_2)$ with edges in $E(V_1,V_3)\cup E(V_2,V_3)$.
Finally, given an edge $uv$ in the cycling 2-factor where $u\in V_2$ and $v\in V_3$ the edges in $E(V_1,V_3)$ that intersect this edge will have a vertex in $V_3$ with smaller index than $v$.
This adds an additional $\frac{n(n-1)}{2}$ intersections between $E(V_2,V_3)$ and $E(V_1,V_3)$.
This gives a total of $\frac{3n(n-1)}{2}$ intersections with $|V_1|=n$ and so the bound given in Theorem \ref{thmNumericalBound} is tight on this graph.

To see uniqueness it suffices to show that there is a unique graph where it requires a minimum of $n-1$ transpositions to get three vertices, $u_1\in V_1$, $u_2\in V_2$ and $u_3\in V_3$, to be neighbors on the convex hull.

Suppose that $u_1\in V_1$ and $u_2\in V_2$ are neighbors on the convex hull, and assume that it would require at least $n-1$ transpositions to get a vertex in $V_3$ adjacent to either on the convex hull.
Then, there are maximal sets of consecutive, with respect to the convex hull, vertices from $V_1\cup V_2$ one that contains $u_1$, and not $u_2$ and another that contains $u_2$ and not $u_1$.
Each set has $n$ vertices, otherwise there is a vertex in $V_3$ of distance less than $n-1$ in cyclic order from $u_1$ or $u_2$.
Thus, without loss of generality we may assume $v_1,v_2,\ldots,v_n=u_1,v_{n+1}=u_2,\ldots,v_n$ are in $V_1\cup V_2$, and $\{v_{2n+1},\ldots,v_{3n}\}=V_3$.
If there is $v_i\in\{v_1,\ldots,v_{n-1}\}$ then there is $v_j\in\{v_1,\ldots,v_{n-1}\}$ such that $v_j\in V_2$ and $v_{j+1}\in V_1$ where it would require $j-1<n-1$ transpositions to get $v_{3n}$ adjacent to $v_j$.
Thus, $\{v_1,\ldots,v_n\}=V_1$ and $\{v_{n+1},\ldots,v_{2n}\}=V_2$.

This proves the uniqueness of the graph.
\end{proof}

\section*{Acknowledgements}

The author wishes to thank Michael Burr, Matthew Macauley, and Marc Lipman for their help with this paper.
Without their support this paper would not have been possible.

\end{document}